\newtheorem{theorem}{Theorem}
\newtheorem{lemma}[theorem]{Lemma}
\newenvironment{proof}[1][Proof]{\noindent\textbf{#1.} }{\ \rule{0.5em}{0.5em}}
\begin{document}
\title[Nondiscriminatory Propagation on Trees]{Nondiscriminatory Propagation on Trees}
\author{Simone Severini}
\affiliation{Institute for Quantum Computing and Department of Combinatorics \&
Optimization, University of Waterloo, Waterloo N2L 3G1, Canada}
\keywords{trees; propagation model; diffusion process; spin systems}

\begin{abstract}
We consider a discrete-time dynamical process on graphs, firstly introduced in
connection with a protocol for controlling large networks of spin 1/2 quantum
mechanical particles [\emph{Phys. Rev. Lett.} \textbf{99}, 100501 (2007)]. A
description is as follows: each vertex of an initially selected set has a
packet of information (the same for every element of the set), which will be
distributed among vertices of the graph; a vertex $v$ can pass its packet to
an adjacent vertex $w$ only if $w$ is its only neighbour without the
information. By mean of examples, we describe some general properties, mainly
concerning homeomorphism, and redundant edges. We prove that the cardinality
of the smallest sets propagating the information in all vertices of a balanced
$m$-ary tree of depth $k$ is exactly $(m^{k+1}+\left(  -1\right)  ^{k}%
)/(m+1)$. For binary trees, this number is related to alternating sign matrices.

\end{abstract}
\maketitle

\section{Introduction}

\noindent\textbf{Background. }In view of applications like quantum RAM or
charge-coupled devices, Burgarth and Giovannetti \cite{bg} introduced a
protocol for arbitrarily control networks of coupled spin 1/2 quantum
particles (for example, an array of trapped ions). An important feature of the
protocol lies on the ability of transforming the physical state of the entire
network, by acting sequentially with the same local operation on a specific
subset of particles. This is valuable, since physical operations on quantum
objects are generally difficult to implement. It has been shown in \cite{bg}
that a network can be prepared in an arbitrary state by acting on the
particles of a subset, only if that subset satisfies certain conditions
related to the eigensystem of the Hamiltonian. Such conditions can be lifted
from the physical scenario and analyzed in a purely combinatorial setting.
This can be described in what follows as a discrete-time dynamical process on graphs.

\bigskip

\noindent\textbf{Definition. }Let $G=(V,E)$ be a simple undirected graph with
$V(G)=\{1,2,...,n\}$. Given a set $S\subset V(G)$, let $N[S]=\{w\in
V(G)\backslash S:\exists v\in S:\{v,w\}\in E(G)\}$ be the (closed)
neighborhood of $S$. Let $\mathcal{P}_{S}:\mathbb{[}n]\longrightarrow V(G)$ be
a map associating a subset of $V(G)$ to each \emph{time }$t\in\mathbb{[}%
n]=\{0,1,...,n-1\}$. We consider the following process:

\begin{itemize}
\item We select a set $S\subseteq V(G)$ and fix $\mathcal{P}_{S}(0)=S$.

\item For each $t\in\mathbb{[}n]\backslash\{0\}$, we have $\mathcal{P}%
_{S}(t)=\mathcal{P}_{S}(t-1)\cup T$, where $T\subseteq N[\mathcal{P}%
_{S}(t-1)]$. Moreover, if $w\in\mathcal{P}_{S}(t)\backslash\mathcal{P}%
_{S}(t-1)$ then there is $v\in\mathcal{P}_{S}(t-1)$ such that $\{v,w\}\in
E(G)$ and $N[v]\backslash\{w\}=\mathcal{P}_{S}(t-1)$.
\end{itemize}

In words, at time $t=0$, we select a subset of vertices $\mathcal{P}_{S}(0)$.
At time $t=1$, we may insert some vertices into $\mathcal{P}_{S}(0)$ and
obtain $\mathcal{P}_{S}(1)$. The \emph{propagation} will go on until
eventually $\mathcal{P}_{S}(k)=V(G)$, for some $k$. Clearly, $k\leq n-1$.
However, the propagation is not free, but it obeys some rules. Specifically, a
vertex $w$ can be inserted at time $t$ in $\mathcal{P}_{S}(t)$, only if it is
adjacent to $v\in\mathcal{P}_{S}(t-1)$ and all other neighbors of $v$ (except
$w$ itself) are already in $\mathcal{P}_{S}(t-1)$. If there is $k$ such that
$\mathcal{P}_{S}(k)=V(G)$, we say that $S$ \emph{propagates} to $G$. We denote
this fact by $S\mathcal{\curvearrowright}G$. Notice that $\mathcal{P}%
_{S}(t-1)\subseteq\mathcal{P}_{S}(t)$. We denote by $\#A$ the cardinality of a
generic set $A$. The figure below represents the steps of the described
dynamical process taking place on a small graph. The square vertices are the
elements of $\mathcal{P}_{S}(0)$. The grey vertices are the elements of
$V(G)\backslash\mathcal{P}_{S}(t)$. In the first row of the figure,
$\mathcal{P}_{S}(0)$ is on the left and $\mathcal{P}_{S}(1)$ on the right; in
the second row, $\mathcal{P}_{S}(2)$ and $\mathcal{P}_{S}(3)=V(G)$.%

\begin{center}
\includegraphics[
natheight=3.141000in,
natwidth=8.265900in,
height=0.8138in,
width=2.0946in
]%
{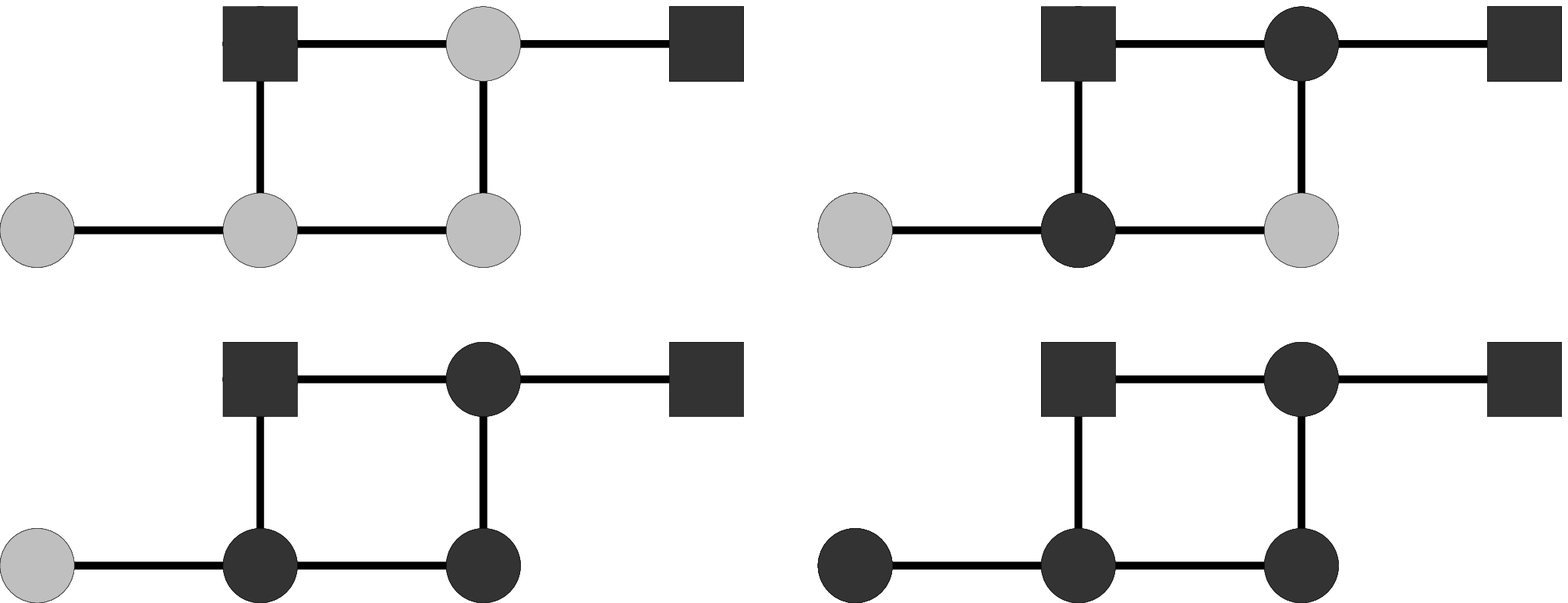}%
\end{center}

\noindent\textbf{Interpretation. }We may depict the above scenario in a more
concrete language: each vertex of an initially selected set has a packet of
information (the same for every vertex in the set), which has to be diffused
among the vertices of the graph; a vertex $v$ can pass its packet to an
adjacent vertex $w$ only if $w$ is its only neighbour still without the
information. In this way, $v$ does not need to \emph{discriminate} among its
neighbours, even if it is permitted to pass the information to only one of
those. Equivalently, this can be interpreted as a procedure for coloring (with
the same color) the vertices of a graph, in such a way that a vertex can be
colored at a certain time step, only if it is the unique uncolored neighbour
of an already colored vertex. Notice that the propagation is not
\emph{synchronized}, that is, we do not require that at a certain time $k$, a
vertex is necessarily included in $\mathcal{P}_{S}(k)$ if it is the unique
uncolored neighbor of a colored vertex.

\bigskip

\noindent\textbf{A quantitative question. }Here is a precise mathematical
problem: given a simple undirected graph $G$, find a set of minimum
cardinality that does propagate in $G$. The cardinality of such a set will be
denoted by $\pi(G)$. Obviously $\pi(G)$ is invariant under isomorphism, since
it does not depend on the labeling of the vertices. Notice that in this
problem we do not take into consideration the \emph{propagation time}. The
problem can be in fact modified by imposing time constraints. Finding $\pi(G)$
is of practical importance, when trying to optimize the number of local
operations required to initialize, and then control, a networks of spin $1/2$
particles. The computational complexity aspects of the question, a formulation
as an orientation problem, and approximation algorithms are studied in
\cite{a}. Roughly speaking, it looks like that $\pi(G)$ depends on the
expansion properties of $G$. Intuitively higher is the number of
\textquotedblleft ways out\textquotedblright\ from each subset of vertices of
a certain size and higher is $\pi(G)$.

\bigskip

\noindent\textbf{Structure of the paper. }We will focus mainly on trees. Apart
from the introduction, the paper contains two sections. In Section 2, we
underline some general properties of propagation, by taking as example paths,
combs and stars. We focus on homeomorphism and the maximum possible number of
edges that a graph can have, given the cardinality of the initial set
$\mathcal{P}_{S}(0)$. We finally make a comment about hamiltonicity and
propagation in digraphs. In Section 3, we will focus on balanced trees. The
main technical tool is a simple proof that the minimum cardinality of
$\mathcal{P}_{S}(0)$, such that $S$ propagates in a balanced binary tree,
realizes the Jacobsthal sequence. Our reference on the theory of graphs is the
book by Diestel \cite{de}.

\section{General facts by example}

\noindent\textbf{Homeomorphism. }It is worth keeping in mind that $\pi(G)$ is
invariant under homeomorphism. It is then more appropriate to think about
$\pi(G)$ not as a quantity associated to a single graph $G$, but rather to a
family of graphs, whose members are all the graphs homeomorphic to $G$. Recall
that graphs $G$ and $H$ are \emph{homeomorphic} if $H$ ($G$) can be obtained
by subdivision and smoothing on $G$ ($H$): a \emph{subdivision} of an edge
$\{u,v\}$ consists of deleting $\{u,v\}$, adding a vertex $w$, plus the edges
$\{u,w\}$ and $\{w,v\}$; a \emph{smoothing} is the reverse operation, and it
is then performed only on vertices of degree two. A \emph{tree} is a graph in
which any two vertices are connected by exactly one path. This property, plus
the homeomorphism remark, make propagation on trees particularly amenable to
quick observations.

\bigskip

\noindent\textbf{Paths. }Let $P_{n}$ be the path of length $n$. The path
$P_{n}$ models the classic spin chain with equal nearest neighbor couplings,
or more complex networks via the notion of graph covering and equitable
partitions (see, \emph{e.g.}, \cite{bo}). This is probably the graph for which
is simplest to determine $\pi(G)$. Indeed $\pi(P_{n})=1$, for every $n$. This
fact is self-evident and it does not need a proof. If $V(P_{n})=\{1,2,...,n\}$
and $\{v,w\}\in E(G)$ only if $w=v+1$, then it is sufficient to take $S=\{1\}$
or $S=\{n\}$. Clearly, $\mathcal{P}_{S}(n-1)=V(P_{n})$. Let $G_{n}=(V,E)$ be a
graph in which $\#V(G)=n$. Let $\mathcal{G}=\{G_{n}:G_{n}$ satisfies a
property $P$ for all $n\}$ be a family of graphs. We can look at the number
$\pi(G_{n})$ as a function $f:\mathbb{Z\longrightarrow}\mathbb{Z}$, defined as
$f(n)=\pi(G_{n})$, for all $G_{n}\in\mathcal{G}$. We have seen that $\pi
(P_{n})=1$ independently of $n$. This suggests the following structural graph
theory problem: characterize classes of graphs $\mathcal{G}$, for which
$\pi(G_{n})=c$, where $c$ is a constant, for all $G_{n}\in\mathcal{G}$. Paths
have this behaviour, since $\pi(P_{n})=1$, for all $n$. The same can be said
for $n$-cycles, since $\pi(C_{n})=2$. For complete graphs this is a linear
function: $\pi(K_{n})=n-1$. The next figure illustrates how
$\{1\}\mathcal{\curvearrowright}P_{4}$ (in the obvious way):%

\begin{center}
\includegraphics[
natheight=1.163200in,
natwidth=8.308300in,
height=0.3183in,
width=2.1049in
]%
{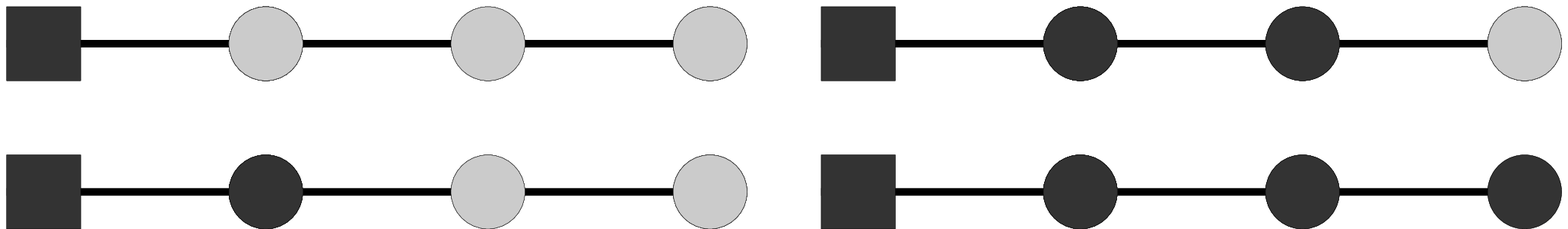}%
\end{center}

\noindent\textbf{Adding edges (I). }Since $\pi(P_{n})=1$ and the minimum
degree of $P_{n}$ is $1$, it is also natural to ask about graphs for which
$\pi(G)$ is exactly the minimum degree, that is, the trivial lower bound.
Paths suggest also another question: given a graph $G$ on $n$ vertices and
$S\subset V(G)$, what is the maximum number of edge that $G$ can have such
that $S\mathcal{\curvearrowright}G$. One can obtain a cycle $C_{n}$ from a
path $P_{n}$ by adding an extra edge to $P_{n}$. For covering $C_{n}$ by
propagation, we need $\#S\geq2$. Specifically, if $S$ contains just two
adjacent vertices then $S\mathcal{\curvearrowright}C_{n}$. Can we augment
$C_{n}$ by extra edges and keep $\#S=2$? Let $E(C_{n}%
)=\{1,2\},\{1,n\},\{2,3\},...,\{n-1,n\}$. Let $S=\{1,n\}$. Still
$S\mathcal{\curvearrowright}C_{n}+\{2,n\}$. More generally,
$S\mathcal{\curvearrowright}C_{n}+\bigcup_{i}\{2,i\}$. It is plausible to
conjecture that when $\#S=2$ and $V(G)=n$, then $\#E(G)=2n-3$ ($n\geq2$) is
the maximum possible number of edges that $G$ can have if
$S\mathcal{\curvearrowright}G$. The graph $C_{n}+\bigcup_{i}\{2,i\}$ attains
the bound. The graph $C_{5}+\{2,5\}\cup\{3,5\}$ is drawn below.%

\begin{center}
\includegraphics[
natheight=2.814100in,
natwidth=10.711600in,
height=0.7308in,
width=2.706in
]%
{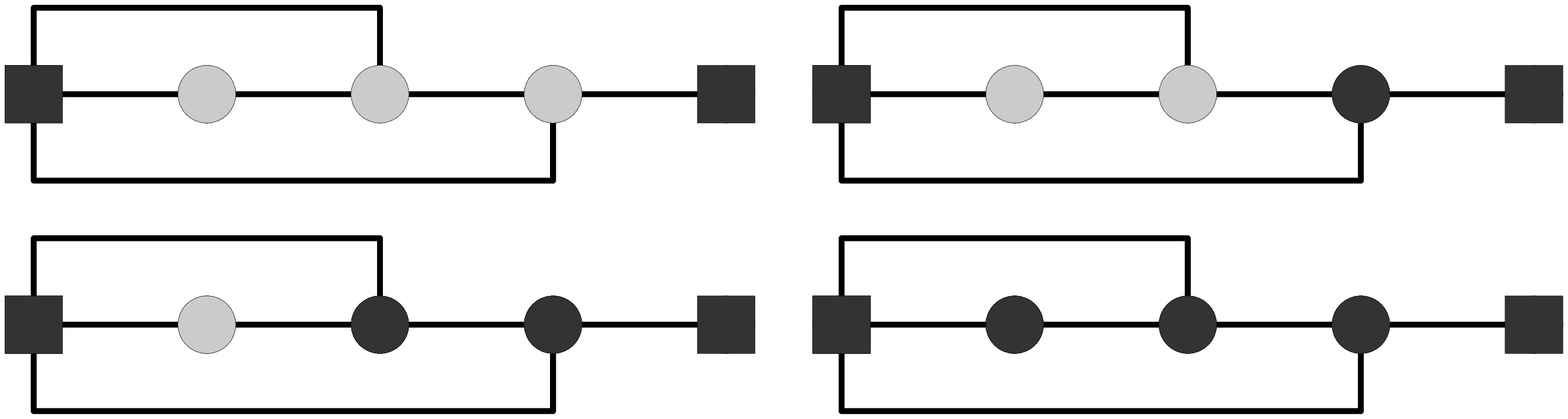}%
\end{center}

\noindent\textbf{Combs.} A \emph{comb} $P_{n,k}$ is a path $P_{n}$ having a
copy of $P_{k}$ attached to each vertex. Usually the plane embedding of this
tree is such that the copies of $P_{k}$ are all drawn in the upper region of
the plane determined by $P_{n}$. This justifies the term \textquotedblleft
comb\textquotedblright; the path $P_{n}$ is then called \emph{bone} and the
paths $P_{k}$ are called \emph{fingers}. So, $\#V(P_{n,k})=kn$. The comb
$P_{n,k}$ has $2$ vertices of degree $2$, $n-2$ vertices of degree $3$, and
$kn-n+2$ vertices and $k$ vertices of degree $1$. Given the invariance under
homeomorphism, it is sufficient to deal with $P_{n,2}$. In fact, longer
fingers attached to the vertices of the bone $P_{n}$ would not modify
$\pi(P_{n,k})$. Equivalently, $\pi(P_{n,k})=\pi(P_{n,2})$ for every $k$. We
have $\pi(P_{n,2})=n/2$ if $n$ is even and $\pi(P_{n,2})=\left\lceil
n/2\right\rceil $ if $n$ is odd. The figure shows how a $3$-element set
propagates in $P_{5,2}$.%

\begin{center}
\includegraphics[
natheight=3.135800in,
natwidth=10.669200in,
height=0.8121in,
width=2.6956in
]%
{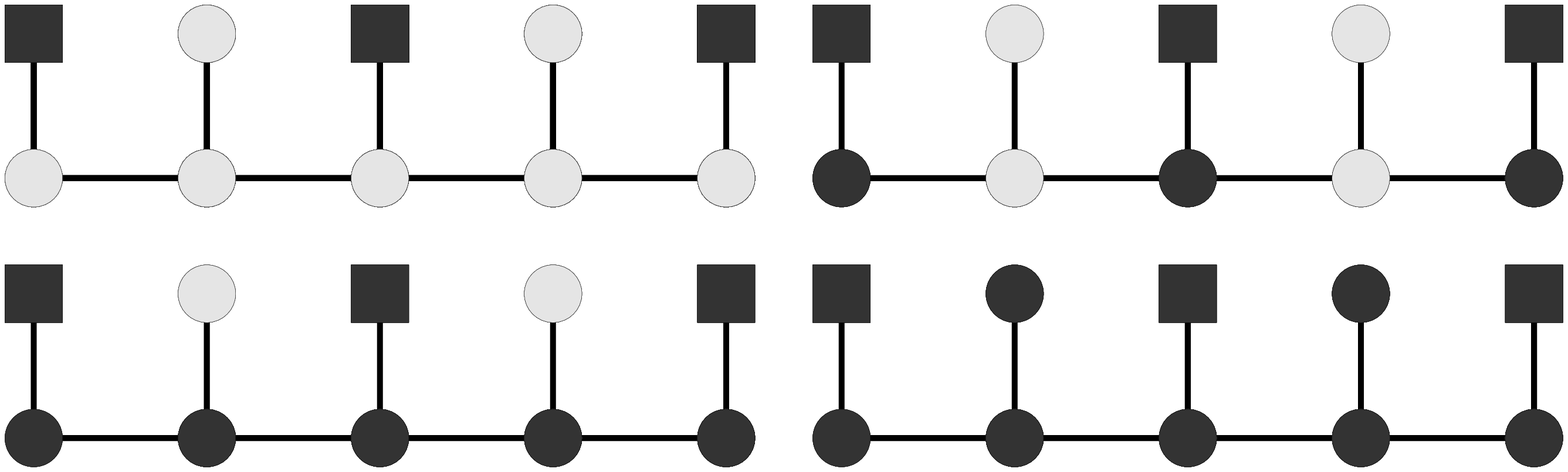}%
\end{center}

\noindent\textbf{Adding edges (II). }As we have already seen in the previous
paragraphs, in some situations one can add edges, and a pre-selected set will
still propagate in the graph. Certainly, one can always add edges connecting
only the pre-selected vertices and create a clique of size $\#S$. In combs,
differently from the case of $C_{n}$, we can construct a clique of size $n/2$,
containing then $\frac{1}{8}n^{2}-\frac{1}{4}$ edges. This implies that the
total number of edges is going to increase with a faster peace than in $C_{n}$
augmented by \emph{redundant} edges. In a generic graph, we can always add
redundant edges connecting vertices in $S$ and then complete the subgraph
induced by $S$ to a clique, without altering the dynamics. Other edges can be
added provided that these satisfy some conditions. Given $v\in\mathcal{P}%
_{S}(t)$, if we can add $\{v,w\}\in E(G)$ then:

\begin{enumerate}
\item Suppose $w\in\mathcal{P}_{S}(t)$. Then there is a vertex $y$ such that
$y\curvearrowright w$ at time $t^{\prime}>t$.

\item Suppose $w\notin\mathcal{P}_{S}(t)$.

\begin{enumerate}
\item If $N[v]\subseteq\mathcal{P}_{S}(t)$ then we can simply proceed to
include $w$ in $\mathcal{P}_{S}(t+1)$.

\item If $N[v]\nsubseteq\mathcal{P}_{S}(t)$ then there is a vertex $y$ such
that $y\curvearrowright w$ at time $t^{\prime}<t^{\prime\prime}$, where
$t^{\prime\prime}$ is the time step at which $v\curvearrowright z$, for some
vertex $z$.
\end{enumerate}
\end{enumerate}

We ask: what is the maximum number of edges in $P_{n,2}+H$ so that
$\pi(P_{n,2})=\pi(P_{n,2}+H)$? What about graphs in general? The answer is not
immediate and we leave it as an open problem.%

\begin{center}
\includegraphics[
natheight=2.258000in,
natwidth=9.012200in,
height=0.5924in,
width=2.2805in
]%
{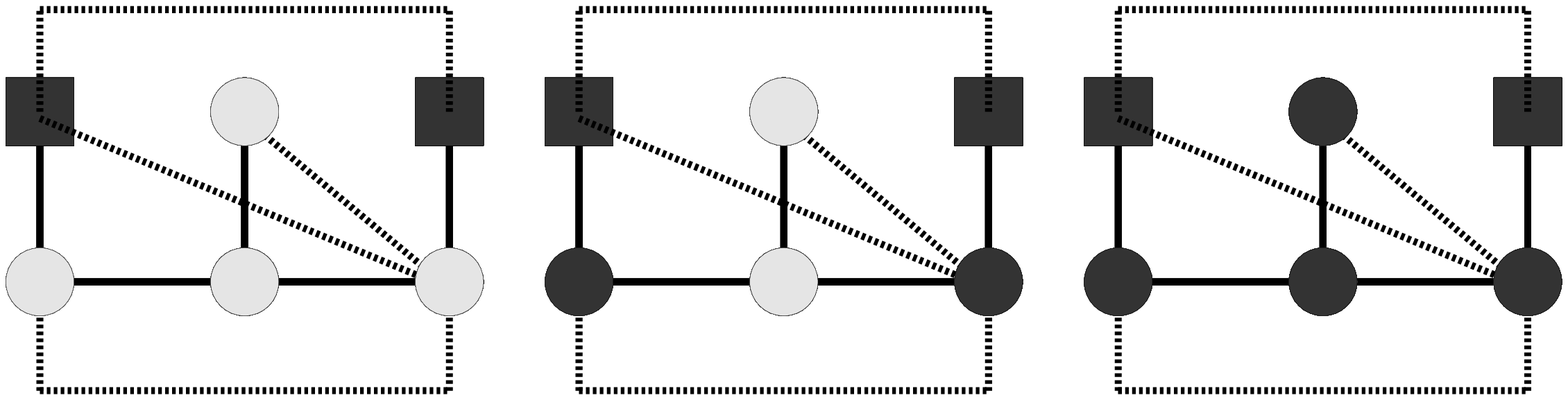}%
\end{center}

\noindent The figure shows propagation on the comb $P_{3,2}$, \emph{saturated}
with an additional number of edges. The extra edges are represented by dotted
lines. Note that adding a single vertex to a graph, in which we have already
fixed the initially selected vertices, may be sufficient to stop the propagation.

\bigskip

\noindent\textbf{Stars. }The complete bipartite graph $K_{1,n-1}$ is also said
to be a \emph{star} on $n$ vertices. Contextually to quantum networks,
properties of free bosons hopping on star networks where investigated in
\cite{ma}. If $S\curvearrowright K_{1,n-1}$ then $\#S=n-2$, by taking $n-2$
leaves (\emph{i.e.}, the vertices of degree 1). Among all graphs on $n$
vertices, the complete graph $K_{n}$ is the only graph for which the ratio
$n/\#\mathcal{P}_{S}(0)$ is smaller. If we include the root in $S$
(\emph{i.e.}, the vertex of degree $n-1$), then $\#S=n-1$. This implies that
we can add $\frac{1}{2}n^{2}-\frac{3}{2}n+1$ redundant edges to $K_{1,n-1}$
and obtain $K_{n}$ such that $S\curvearrowright K_{1,n-1}$ and
$S\curvearrowright K_{n}$, for exactly the same set $S$. If a graph $G$ has
$K_{1,n-1}$ as a spanning subgraph then $\#\mathcal{P}_{S}(0)\in\{n-2,n-1\}$,
for $G$. Recall that a \emph{spanning subgraph} is a subgraph that contains
all the vertices of the original graph. Valuable remarking that if a set
propagates in a graph then it will propagates in all of its spanning
subgraphs. Equally, the minimum cardinality of such a set is nonincreasing
when restricting ourselves to spanning subgraphs.

\bigskip

\noindent\textbf{Digraphs and hamiltonicity. }An \emph{orientation} of a graph
$G$ is a directed graph $\overrightarrow{G}$ obtained by giving a direction to
the edges of $G$ and in this way substituting $E(G)$ with a set of directed
\emph{arcs}. The propagation dynamics induces a partial ordering on the
vertices of $G$ and therefore an orientation. Observe that the definition
given in the introduction of this paper can be extended to digraphs in a
straightforward way. We ask: given a graph $G$, can we always find an
orientation $\overrightarrow{G}$ and a set $S$ such that $S\curvearrowright
\overrightarrow{G}$ and $\#S=1$. If $G$ has a Hamilton path then the answer is
in the affirmative, because we can just orient \emph{forward }the edges of the
Hamilton path and \emph{backwards} the remaining edges. Notice that the grid
considered in \cite{bg} is Hamiltonian. We can then always take a single
vertex to propagate in an arbitrary large grid, as far as we give a proper
orientation to the edges. Formally, let $\overrightarrow{P}_{n}$ be the
Hamilton directed path and let $S=\{1\}$. Within respect to $\overrightarrow
{G}$, we have $\mathcal{P}_{S}(t)=\{1,2,...,t\}$, for each $t$. The arc set of
$\overrightarrow{G}$ is $E(\overrightarrow{G})=E(\overrightarrow{P}_{n}%
)\cup\{(u,v):\{u,v\}\in E(G)\wedge v<u\}$. In the oriented version, we get
$\#\mathcal{P}_{S}(0)=1$ also for the complete graph. For $K_{n}$, the
orientation giving rise to a Hamilton directed path is obtained by
constructing any Hamiltonian tournament. Here is a picture of a single vertex
propagating in an orientation of $K_{4}$:%

\begin{center}
\includegraphics[
natheight=2.140400in,
natwidth=12.253500in,
height=0.563in,
width=3.0908in
]%
{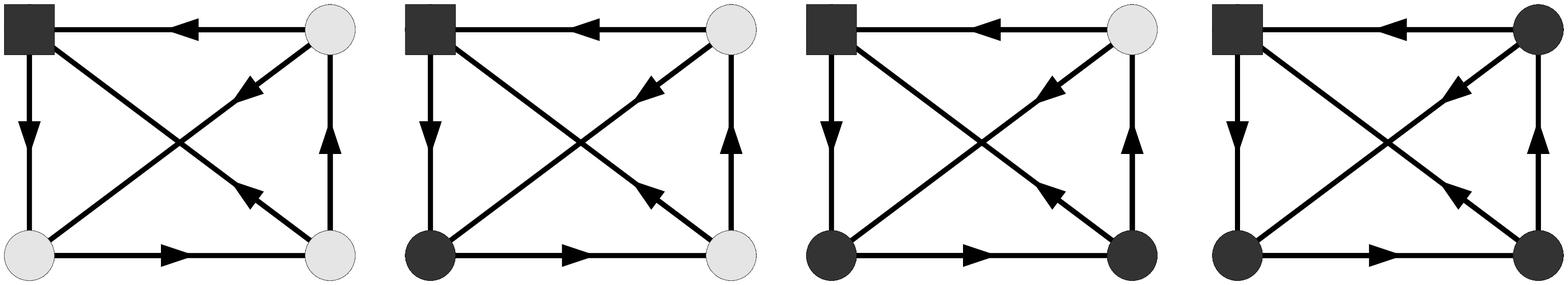}%
\end{center}

\section{Balanced trees}

Let us denote by $T_{2,k}$ a \emph{balanced binary tree of depth} $k$. Then
$\#V(T_{2,k})=2^{k}-1$. The root of $T_{2,k}$ is denoted by $v$. All the
remaining vertices are denoted by $v_{x}$, where $x\in\{0,1\}^{i}$, for
$i=1,...,k-1$. In particular, $\{v,v_{0}\},\{v,v_{1}\}\in E(T_{k,n})$ and
$\{v_{x},v_{x0}\},\{v_{x},v_{x1}\}\in E(T_{2,k})$, for every $x\in\{0,1\}^{i}
$ and $i=1,...,k-2$. The number of time steps needed to cover a tree is
necessarily equal to the diameter of the tree.

\bigskip

\noindent\textbf{Top-down propagation. }A set $S$ is said to propagate in
$T_{2,k}$ by \emph{topdown} propagation, when it propagates in $T_{2,k}$ and
if $v_{x}\in\mathcal{P}_{S}(t)$ then $x\in\{0,1\}^{i}$, with $i\leq t+1$, for
every $t$. Equivalently, in topdown propagation, the \emph{information flow}
goes from the root to the leafs. As a consequence, $v\in\mathcal{P}_{S}(0)$.
It is straightforward to determine $\pi_{TD}(T_{2,k})$, \emph{i.e.}, the
cardinality of the smallest set covering $T_{2,k}$ by topdown propagation:
given a tree $T_{2,k}$, we have $\pi_{TD}(T_{2,k})=2^{k-1}$. Let us see why.
If $S=\{v\}$ then $\mathcal{P}_{S}(1)=S$. So, we need to include $v_{0}$ or
$v_{1}$ in $S$. Let us take $S=\{v,v_{0}\}$. Now, $\mathcal{P}_{S}%
(1)=\{v,v_{0},v_{1}\}$. However, $\mathcal{P}_{S}(2)=\mathcal{P}_{S}(1)$. So,
we need to include $v_{00},v_{01},v_{10}$ or $v_{11}$ in $S$. In this way, if
$S=\{v,v_{0}\}\cup\{v_{x}:x=y0\wedge y\in\{0,1\}^{i},1\leq i\leq k-2\}$ then
$S\mathcal{\curvearrowright}G$. It is then clear that $\pi_{TD}(T_{2,k}%
)=\#S=2^{k-1}$.

\bigskip

\noindent\textbf{Bottom-up propagation. }A set $S$ is said to propagate in
$T_{2,k}$ by \emph{bottom-up} propagation, when it propagates in $T_{2,k}$ and
if $v_{x}\in\mathcal{P}_{S}(t)$ then $x\in\{0,1\}^{i}$, with $i=k-1$ if $t=0$,
$i=k-2$ if $t=1$, and so on. Equivalently, in bottom-up propagation, the
information flow goes from the leafs (\emph{i.e.}, the vertices of the form
$v_{x}$, with $x\in\{0,1\}^{k-1}$) to the root. It is obvious that $T_{2,k}$
is covered by bottom-up propagation if $\mathcal{P}_{S}(0)$ is the set of all
leaves, that is $\#\mathcal{P}_{S}(0)=2^{k-1}$. This is not equal to the
optimum, if we want that $v\in S$ only if $v$ is a leaf, without any further
constraint. The figure shows bottom-up propagation in $T_{2,4}$.%

\begin{center}
\includegraphics[
natheight=7.869800in,
natwidth=9.095200in,
height=1.9951in,
width=2.3021in
]%
{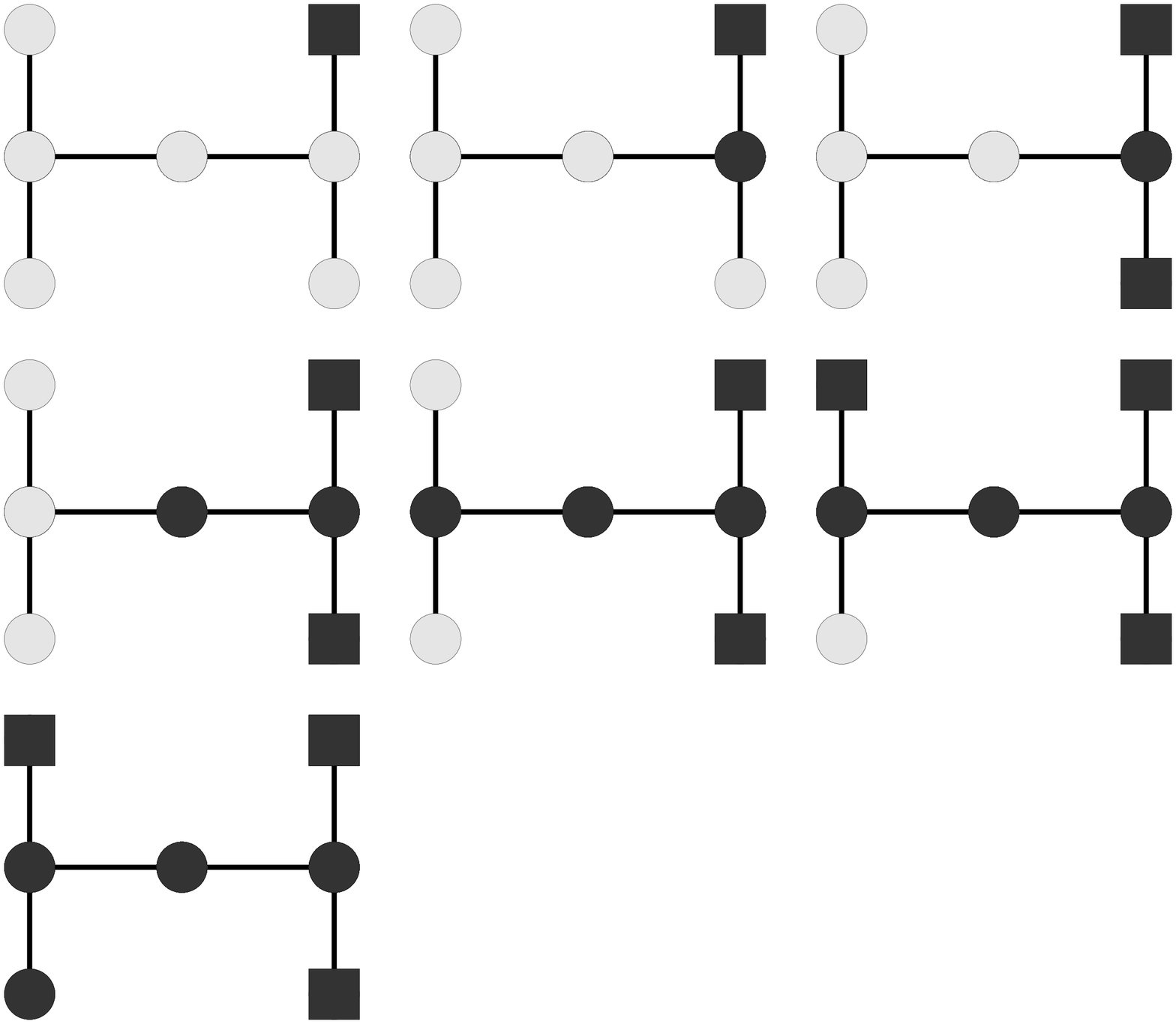}%
\end{center}

\noindent We will give a formal proof of the next result, a technical lemma
for establishing Theorem \ref{th}.

\begin{lemma}
\label{pro}For $T_{2,k}$ be a balanced binary tree of depth $k$. Then
\[
\pi_{leaf}(T_{2,k})=\pi(T_{2,k})=\frac{2^{k}+\left(  -1\right)  ^{k-1}}{3},
\]
the $(k-1)$-th Jacobsthal number.
\end{lemma}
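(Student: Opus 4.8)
The plan is to prove the two equalities separately. Write $J_{k-1} = (2^k + (-1)^{k-1})/3$ for the $(k-1)$-th Jacobsthal number, and recall that $J_n$ satisfies $J_n = J_{n-1} + 2J_{n-2}$ with $J_0 = 0$, $J_1 = 1$; equivalently $J_n = 2J_{n-1} + (-1)^{n-1}$. First I would establish the upper bound $\pi_{leaf}(T_{2,k}) \le J_{k-1}$ by an explicit recursive construction. Think of $T_{2,k}$ as a root $v$ joined to two copies of $T_{2,k-1}$. The idea is that after one time step the propagation within a subtree can "hand off" the root of that subtree to $v$, so one of the two subtrees needs a full leaf-set solution while the other needs only a solution that additionally colors its own root early enough — but on closer inspection the cleaner recursion is to track $\pi_{leaf}(T_{2,k})$ directly: choosing the initial leaf set for $T_{2,k}$ amounts to choosing leaf sets in the two depth-$(k-1)$ subtrees, and the root $v$ gets colored for free once one of the two subtree-roots is colored and that subtree-root's sibling subtree is already fully colored. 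Chasing the parity carefully gives a recurrence whose solution is $J_{k-1}$; I would verify the base cases $k=1$ ($\pi=0$, empty tree on $2^1-1=1$... ) and $k=2$ ($\pi_{leaf}=1$) by hand and then induct.

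Next I would prove the matching lower bound $\pi(T_{2,k}) \ge J_{k-1}$, which also subsumes $\pi_{leaf} \ge \pi$. The natural tool is a weighting/counting argument on the tree: assign to each vertex a "demand" and show that any initially selected set $S$ must meet a certain quota on every subtree. Concretely, for a subtree rooted at a vertex $u$, let $f(u)$ be the minimum number of elements of $S$ that must lie strictly inside that subtree in order for the subtree to become fully colored assuming its root $u$ is colored "from outside" at some point, and let $g(u)$ be the analogous minimum when $u$ must instead be colored from inside (i.e. the information flows outward through $u$). Both $f$ and $g$ satisfy two-term recursions down the tree because $u$ has exactly two children; solving these recursions and evaluating at the root yields the bound $J_{k-1}$. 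The key structural fact making this work is that in a tree every edge is a bridge, so information crossing an edge has a well-defined direction, and a degree-3 internal vertex $u$ can be colored from its parent side only after both child-subtrees are fully colored — this forces one of the two child-subtrees to be "self-sufficient," which is exactly the source of the doubling in the Jacobsthal recurrence.

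The main obstacle I anticipate is making the bottom-up counting argument fully rigorous rather than merely plausible, because the propagation process is asynchronous: a vertex is never forced to fire, so one must argue about what is possible rather than what happens at a fixed clock tick. The right formalization is probably to pass to the orientation induced by a successful propagation (as mentioned in the "Digraphs and hamiltonicity" paragraph): $S \curvearrowright T_{2,k}$ is equivalent to the existence of an acyclic orientation in which every vertex not in $S$ has in-degree exactly... (one, via the "unique uncolored neighbor" rule read backwards). Phrasing the lower bound as: "in any such orientation, each subtree must contain at least $J$ sources lying in $S$" turns the asynchronous dynamics into a static combinatorial statement amenable to induction on $k$. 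Once that translation is in place the two-term recursion is routine. I would close by noting that the upper-bound construction already uses only leaves, so $\pi_{leaf} \le J_{k-1} \le \pi \le \pi_{leaf}$ forces all three to coincide, completing the proof.
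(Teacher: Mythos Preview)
Your plan is sound in outline but differs markedly from what the paper actually does. The paper's argument is much lighter: it runs a \emph{greedy online} construction on $T_{2,3}$ (start from a single leaf $v_{00}$, let the process run, and each time it stalls insert the unique leaf that unblocks the next step), observes that this yields $\#S^3=3$, and then inducts by gluing two copies of $T_{2,k}$ under a new root to obtain the recursion $\#S^k = 2\,\#S^{k-1}\pm 1$ with sign alternating in $k$. Minimality is not proved via a separate counting argument; it is asserted on the grounds that the online procedure only adds a leaf when the propagation is provably stuck, so no smaller set can work. Your proposal, by contrast, cleanly separates an explicit upper-bound construction from a structural lower bound via the pair $(f,g)$ (or equivalently the induced acyclic orientation with unit in-degree on $V\setminus S$). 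That buys you a genuinely rigorous lower bound, which the paper leaves implicit, at the cost of more machinery.

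Two small corrections. First, your base case is off: $T_{2,1}$ has $2^1-1=1$ vertex, and $\pi(T_{2,1})=1$, matching $(2^1+(-1)^0)/3=1$; it is not $0$. Second, in your orientation reformulation you should state the condition precisely rather than trail off: the induced orientation has every vertex outside $S$ receiving exactly one ``forcing'' arc, \emph{and} the forcing vertex must have all its other neighbours already colored at that moment, which in a tree translates to a constraint on how the unique arc entering $u$ interacts with the arcs on the edges to $u$'s children. Getting that constraint exactly right is what makes the $(f,g)$ recursion close; as written, ``in-degree exactly one'' alone is not enough to force the Jacobsthal bound, since any spanning in-forest rooted at $S$ would satisfy it. Once you add the correct local condition (at each internal vertex, at most one child-edge is oriented toward the child, and if the parent-edge is oriented toward $u$ then both child-edges must be oriented toward $u$'s children only after those subtrees are self-sufficient), the induction goes through.
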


\begin{proof}
First, take $T_{2,3}$. It is useful to write $\mathcal{P}_{S}^{k}(0)$ and
$S^{k}$ when considering $T_{2,k}$. Suppose the elements of $\mathcal{P}%
_{S}^{3}(0)$ being leaves only. We can start by including in $S^{3}$ a single
vertex, say $v_{00}$, in agreement with the notation defined. We will think of
the information flow going from \emph{bottom-left} to \emph{bottom-right},
with the propagation starts from $v_{00}$ and ending at $v_{11}$. (Just think
of $T_{2,3}$ drawn on the plane in the usual way.) We will add vertices in $S$
\emph{online}, as required, every time the propagation stops. In this way, we
provide that $\#S$ is as small as possible. We have $v_{00}\overset
{1}{\curvearrowright}v_{0}$ only if $v_{01}\in\mathcal{P}_{S}^{3}(0)$. The
notation is easy: $v_{00}$ propagates to $v_{0}$ at time $1$. Now
$S^{3}=\{v_{00},v_{01}\}$. So, $v_{0}\overset{2}{\curvearrowright}v$ and
$v\overset{3}{\curvearrowright}v_{1}$. At this stage, $v_{1}\overset
{4}{\curvearrowright}v_{11}$ only if $v_{10}\in S^{3}$. At the end
$S^{3}=\{v_{00},v_{01},v_{10}\}$ and $\#S^{3}=3$. It follows that $\pi
_{leaf}(T_{2,3})=\pi(T_{2,3})=3$. The tree $T_{2,4}$ can be constructed by
taking two copies of $T_{2,3}$ and adding an extra vertex adjacent of the
roots of the smaller trees. The new vertex is the root of $T_{2,4}$. We can
include the set $S^{3}$ in $\mathcal{P}_{S}^{4}(0)$ for $T_{2,4}$. The root of
$T_{2,4}$ is automatically covered and so the vertex $v_{1}\in V(T_{2,4})$.
This is sufficient to show that the set $S^{4}=\{v_{000},v_{001}%
,v_{100},v_{110}\}$ propagates in $T_{2,4}$. So, $\#\mathcal{P}_{S}%
^{4}(0)=\left(  2\cdot\#\mathcal{P}_{S}^{3}(0)\right)  -1=5$. Let $G$ be the
graph obtained by adding a pendant vertex to the root of $T_{2,4}$
(\emph{i.e.}, a vertex of degree one). For this graph $\pi(G)=\#\mathcal{P}%
_{S}^{4}(0)+1=6$ puts in evidence a recursive way to obtain $\pi
_{leaf}(T_{2,k})$. Since $T_{2,k+1}$ is constructed with two copies of
$T_{2,k}$ plus a new root, we can write $\#\mathcal{P}_{S}^{k}%
(0)=2(\#\mathcal{P}_{S}^{k-1}(0))+1$, for $k$ odd, and, $\#\mathcal{P}_{S}%
^{k}(0)=2(\#\mathcal{P}_{S}^{k-1}(0))-1$, for $k$ even. Such quantities are
exactly $\pi_{leaf}(T_{2,k})=\pi(T_{2,k})$, because the vertices inserted
online in $S^{k}$ are leaves. Odd and even cases are combined in the formula
$\pi(T_{2,k})=\frac{2^{k}+\left(  -1\right)  ^{k-1}}{3}$, the $(k-1)$-th
Jacobsthal number \cite{sl}.
\end{proof}

\bigskip

It has been pointed out that a set $S\curvearrowright G$ if $S$ represents a
configuration incompatible with a nontrivial eigenstate of the network
Hamiltonian \cite{bg}. Connections between the ground state vector for some
special spin systems and the alternating-sign matrices (ASMs) form an active
field of research in the interface between combinatorics, statistical
mechanics and condensed matter (see \cite{pr} and the references contained
therein). The number of ASMs of size $n$ is $A(n)=\prod_{l=0}^{n-1}%
\frac{(3l+1)!}{(n+l)!}$. Frey and Sellers \cite{f} proved that $A(n)$ is odd
if and only if $n$ is a Jacobsthal number. This observation could reveal a
potential link between ASMs and the physics (\emph{e.g.}, properties of the
eigensystem) involved in the protocol proposed in \cite{bg}, for networks
modeled by trees.

Theorem \ref{th} is the main point of this section. The proof is essentially
the same as the one of Lemma \ref{pro}. The sequences realized by $\pi
(T_{m,k})$ can be seen as generalizations of the Jacobsthal numbers.

\begin{theorem}
\label{th}Let $T_{m,k}$ be an balanced $m$-ary tree of depth $k$. Then
\[
\pi_{leaf}(T_{m,k})=\pi(T_{m,k})=\frac{m^{k+1}+\left(  -1\right)  ^{k}}{m+1}.
\]

\end{theorem}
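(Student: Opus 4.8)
The proof will follow the pattern of Lemma~\ref{pro}, using that $T_{m,k}$ is obtained from a single new vertex $v$ (the root) joined to the roots $v_1,\dots,v_m$ of $m$ vertex-disjoint copies $C_1,\dots,C_m$ of $T_{m,k-1}$; the recursion bottoms out at $T_{m,0}$ (one vertex, $\pi=1$) and $T_{m,1}=K_{1,m}$, for which Section~2 already records $\pi=m-1$. Writing $a_k=\pi(T_{m,k})$, the whole statement reduces to establishing the recurrence
\[
a_k \;=\; m\,a_{k-1}+(-1)^k,\qquad k\ge 1,
\]
together with the remark that, since a leaf set is in particular a set, $a_k\le\pi_{leaf}(T_{m,k})$; the construction below produces a leaf set of size $(m^{k+1}+(-1)^k)/(m+1)$ and the lower bound below shows $a_k\ge(m^{k+1}+(-1)^k)/(m+1)$, so all three quantities coincide. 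Given the recurrence, one unrolls $a_k=\sum_{j=0}^{k}(-1)^j m^{k-j}=(m^{k+1}+(-1)^k)/(m+1)$, the ``$m$-ary Jacobsthal'' sequence.

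\smallskip
\noindent\textbf{Upper bound.} I would build a propagating leaf set of the required size by seeding the copies in turn, adding vertices online as in Lemma~\ref{pro}. Seed $C_1$ with a leaf set that propagates $T_{m,k-1}$ \emph{bottom-up}, i.e.\ so that its own root $v_1$ never transmits downward: inductively this costs $a_{k-1}$ seeds if $k-1$ is even and $a_{k-1}+1$ if $k-1$ is odd (barring the root from transmitting is free except in the latter parity). When $C_1$ is filled, $v_1$ transmits to $v$, so $v$ is covered. Now seed $C_3,\dots,C_m$ by optimal sets and run each to completion---their roots may transmit downward because $v$ is covered---so that $v_3,\dots,v_m$ get covered. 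At that point every neighbour of $v$ except $v_2$ is covered, so $v$ transmits to $v_2$; thus $v_2$ is covered for free, and $C_2$ only needs a leaf set propagating $T_{m,k-1}$ with its root and the root's parent already covered, which inductively is $a_{k-1}$ if $k-1$ is odd and $a_{k-1}-1$ if $k-1$ is even. Summing the $m$ contributions, the extra ``$+1$'' of the bootstrap copy $C_1$ and the saved ``$-1$'' of the free-root copy $C_2$ occur in complementary parities, so the total is $m\,a_{k-1}+(-1)^k$.

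\smallskip
\noindent\textbf{Lower bound.} Let $S$ propagate $T_{m,k}$ and split it as $S=(S\cap\{v\})\cup S_1\cup\dots\cup S_m$ with $S_i=S\cap V(C_i)$. Each vertex transmits to at most one other vertex (once it transmits, all of its neighbours are covered), so at most one root $v_i$ is covered by a move $v\curvearrowright v_i$; all other roots are covered from below. Whichever neighbour $v_j$ transmits to $v$ first does so before $v$ is covered, hence $v_j$ cannot yet have transmitted downward, which forces $|S_j|\ge a_{k-1}$, and $\ge a_{k-1}+1$ when $k-1$ is odd. A copy whose root is covered by $v$ still needs $|S_i|\ge a_{k-1}-1$ ($\ge a_{k-1}$ if $k-1$ is odd); every remaining copy needs $|S_i|\ge a_{k-1}$, since having $v_i$'s parent covered never produces a move unavailable in the standalone tree. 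After a short case analysis on whether $v$ itself is seeded, these bounds sum to $|S|\ge m\,a_{k-1}+(-1)^k$ in both parities, matching the construction; solving the recurrence then completes the proof.

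\smallskip
\noindent\textbf{Main obstacle.} The delicate part is the parity-sensitive control of the two exceptional copies, in both directions: that bottom-up propagation of $T_{m,k-1}$ costs exactly one extra seed precisely when $k-1$ is odd, equivalently that the cheapest propagation of an $m$-ary tree of odd ``depth'' is forced to pass information from the root to a child early on; and dually that a precovered root saves exactly one seed only in the even case. Turning the informal online description into rigorous such statements, and making the ``at most one free root'' bookkeeping airtight, is where the real work lies; the construction and the closed-form solution of the recurrence are routine.
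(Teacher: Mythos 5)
Your decomposition of $T_{m,k}$ at the root into $m$ disjoint copies of $T_{m,k-1}$, yielding the recurrence $a_k=m\,a_{k-1}+(-1)^k$ and hence the closed form, is exactly the paper's approach: the paper builds $T_{2,k+1}$ from two copies of $T_{2,k}$ plus a new root and records the same parity-alternating $\pm1$ before solving the recurrence. Where you genuinely diverge is in rigour rather than in route. The paper's argument is only the greedy online construction --- an upper bound, with minimality asserted because seeds are added only when propagation stalls --- whereas you isolate the two parity-sensitive auxiliary quantities (the cost of covering a copy so that its root can fire into $v$ unaided, and the saving enjoyed by the one copy whose root $v$ covers for free) and sketch a matching lower bound from the observations that $v$ transmits to at most one child and that the first copy to reach $v$ must be covered entirely from within. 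This buys a complete two-sided argument the paper does not supply, and it also makes explicit why the correction term flips sign with $k$. The price is that your two auxiliary claims must themselves be established by a joint induction alongside $a_k$; they are correct (e.g.\ for $K_{1,m}$ the bottom-up cost is $m=a_1+1$ and the precovered-root cost is $m-1=a_1$, matching your parities), and your closing paragraph accurately locates the remaining work there, so I would count this as the same proof carried out more carefully rather than as a gap.
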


The table below contains the first values of $\pi(T_{m,k})$:%

\[%
\begin{array}
[c]{ccccccc}%
m/k & \text{1} & \text{2} & \text{3} & \text{4} & \text{5} & \text{6}\\
\text{1} & \text{1} & \text{0} & \text{1} & \text{0} & \text{1} & \text{0}\\
\text{2} & \text{1} & \text{1} & \text{3} & \text{5} & \text{11} & \text{21}\\
\text{3} & \text{1} & \text{2} & \text{7} & \text{20} & \text{61} &
\text{182}\\
\text{4} & \text{1} & \text{3} & \text{13} & \text{51} & \text{205} &
\text{819}\\
\text{5} & \text{1} & \text{4} & \text{21} & \text{104} & \text{521} &
\text{2604}\\
\text{6} & \text{1} & \text{5} & \text{31} & \text{185} & \text{1111} &
\text{6665}%
\end{array}
\]
The number
\[
Q_{m,k}=\sum_{i=0}^{k+1}\left(  -1\right)  ^{k+1-i}m^{k+1-i}%
\]
is the $mk$-th entry of the table, disregarding of the signs. \emph{All}
sequences realized by the rows of the table appear to count walks of length
$k$ between any two vertices in the complete graph $K_{m+1}$, \emph{i.e.},
these are equal to $A(K_{m+1})_{i,j}^{k}$ (with $i\neq j$), where $A(K_{m})$
is the adjacency matrix of $K_{m}$. It is an open problem to exhibit a
bijection between each element in an initially selected set of minimal
cardinality propagating in $T_{m,k}$ and walks of length $k$ in $K_{m+1}$.

\bigskip

\noindent\emph{Acknowledgments. }The propagation model described here has been
suggested to me by Vittorio Giovannetti, during the \textquotedblleft The
first International Iran Conference on Quantum Information
(IICQI)\textquotedblright, held at Kish Island, in September 2007. I would
like to thank Ashkan Aazami and Joseph Cheriyan for their interest and
encouragement. I acknowledge the financial support of DTO-ARO, ORDCF,
Ontario-MRI, CFI, CIFAR, and MITACS.

\end{document}